\newtheorem{theorem}{Theorem}[section]
\newtheorem{prop}[theorem]{Proposition}
\newtheorem{lemma}[theorem]{Lemma}
\newtheorem{cor}[theorem]{Corollary}
\theoremstyle{definition}
\newtheorem{defn}[theorem]{Definition}
\newtheorem{note}[theorem]{Note}
\theoremstyle{remark}
\newtheorem{rem}[theorem]{Remark}
\numberwithin{equation}{section}
\title{On the denseness of minimum attaining operators}
\author{S. H. Kulkarni}
\address{Department of Mathematics, Indian Institute of Technology Madras, Taramani, Chennai, Tamilnadu  600 036}
\email{shk@iitm.ac.in}
\author{G. Ramesh}
\address{Department of Mathematics, Indian Institute of Technology - Hyderabad, Kandi, Sangareddy (M), Medak (Dist), Telangana, India 502 285}
\email{rameshg@iith.ac.in}
\date{\currenttime \;  \today}
\begin{document}
\maketitle

\begin{abstract}
Let $H_1,H_2$ be complex Hilbert spaces and $T$ be a densely defined closed linear operator (not necessarily bounded). It is proved that for each $\epsilon>0$, there exists a bounded operator $S$ with $\|S\|\leq \epsilon$ such that $T+S$
is minimum attaining. Further, if $T$ is bounded below, then $S$ can be chosen to be rank one.
\end{abstract}

\section{introduction}
It is well known that the set of all norm attaining operators defined between two complex Hilbert spaces is norm dense in the space of all bounded linear operators
defined between complex Hilbert spaces. This result is even true for operators defined between Banach spaces, when the domain space is reflexive, which is proved by Lindenstrauss \cite{lindenstrauss}. A simple proof of this fact, in the case of Hilbert space operators is given by Enflo et al. in \cite{enfloetal}. Moreover, the authors proved that rank one perturbation of a bounded operator can be made as norm attaining operator.

Similar to the norm attaining operators, bounded operators that attain their minimum modulus is introduced in \cite{carvajalneves2}. The unbounded case is dealt in \cite{shkgr7} and the authors established basic properties of minimum attaining closed densely defined operators.

It is very natural to ask whether the Lindenstrauss theorem is true in case of minimum attaining operators. In this article we answer this question affirmatively. We show that the set of all minimum attaining densely defined closed operators is dense in the class of densely defined closed operators with respect to the gap metric. As a consequence, we can conclude the same is true for bounded operators with respect to the operator norm. In a special case, we also show that rank one perturbations lead to minimum attaining operators. This leads to the perturbations of minimum attaining operators.

In the second section we recall some basic definitions and results which we need for proving our main results. In the third section we prove the Lindenstrauss type theorem for minimum attaining operators.

\section{Notations and Preliminaries}
 Throughout we consider infinite dimensional complex Hilbert spaces which will be denoted by $H, H_1,H_2$ etc. The inner product and the
induced norm are denoted  by  $\langle \cdot\rangle$ and
$||.||$, respectively. The closure of a subspace $M$ of $H$ is denoted by $\overline{M}$. We denote the unit sphere of $M$ by $S_M={\{x\in M:\|x\|=1}\}$.

If $M$ is a closed subspace of a Hilbert space $H$, then $P_M$ denotes the orthogonal projection $P_M:H\rightarrow H$ with range $M$.

Let $T$ be a linear operator with domain $D(T)$ (a subspace of $H_1$) and taking values in $H_2$. If $D(T)$ is dense in $H_1$, then $T$ is called a densely defined operator. The  graph $G(T)$ of $T$ is defined by  $G(T):={\{(Tx,x):x\in D(T)}\}\subseteq H_1\times H_2$. If $G(T)$ is
closed, then $T$ is called a closed operator.  Equivalently, $T$ is closed if $(x_n)$ is a sequence in  $D(T)$  such that  $x_n\rightarrow x\in H_1$ and $Tx_n\rightarrow y\in H_2$, then $x\in D(T)$ and $Tx=y$.

For a densely defined linear operator $T$, there exists a unique linear operator (in fact, a closed operator) $T^*:D(T^*)\rightarrow H_1$, with
\begin{equation*}
D(T^*):={\{y\in H_2: x\rightarrow \langle Tx,y\rangle \, \text{for all}\, x\in D(T)\,\text{is continuous}}\}\subseteq H_2
\end{equation*}
 satisfying
 \begin{equation*}
 \langle Tx,y\rangle =\langle x,T^*y\rangle \text{ for all}\; x\in D(T) \text{ and}\; y\in D(T^*).
 \end{equation*}

 We say $T$ to be bounded if there exists $M>0$ such that $\|Tx\|\leq M\|x\|$ for all $x\in D(T)$. Note that if $T$ is densely defined and bounded then $T$ can be extended to all of $H_1$ in a unique way.

By the closed graph Theorem \cite{rud}, an everywhere defined
closed operator is bounded.  Hence the domain of an unbounded closed operator is a proper subspace of a Hilbert space.

The space of all bounded linear operators between $H_1$ and $H_2$ is
denoted by $\mathcal B(H_1,H_2)$ and the class of all closed linear operators between $H_1$ and $H_2$ is denoted by $\mathcal C(H_1,H_2)$. We write $\mathcal B(H,H)=\mathcal B(H)$  and $\mathcal C(H,H)=\mathcal C(H)$.

If $T\in \mathcal B(H_1,H_2)$ is such that for every bounded sequence $(x_n)$ of $H_1$,  $(Tx_n)$ has a convergent subsequence in $H_2$, then $T$ is called a compact operator. Equivalently, $T$ is compact if and only if for every bounded set $B$ of $H_1$, $T(B)$ is pre compact in $H_2$.

If $T\in \mathcal C(H_1,H_2)$, then  the null space and the range space of $T$ are denoted by $N(T)$ and $R(T)$, respectively and the space $C(T):=D(T)\cap N(T)^\bot$ is called the carrier of $T$.
In fact, $D(T)=N(T)\oplus^\bot C(T)$ \cite[page 340]{ben}. Here $\oplus^{\bot}$ denote the orthogonal  direct sum of subspaces.

A subspace $D$ of $D(T)$ is called a core for $T$ if for any $x\in D(T)$, there exists a sequence $(x_n)\subset D$ such that $\displaystyle \lim_{n\rightarrow \infty}x_n=x$ and $\displaystyle \lim_{n\rightarrow \infty}Tx_n=Tx$. In other words, $D$ is dense in the graph norm, which is defined by $\||x\||:=\|x\|+\|Tx\|$ for all $x\in D(T)$. It is a well known fact that $D(T^*T)$ is a core for $T$ (see \cite[Definition 1.5, page 7, Proposition 3.18, page 47]{schmudgen} for details).


 Let $S,T\in \mathcal C(H)$ be densely defined operators with domains $D(S)$ and $D(T)$, respectively. Then $S+T$ is an operator with domain $D(S+T)=D(S)\cap D(T)$ defined by $(S+T)(x)=Sx+Tx$ for all $x\in D(S+T)$.
 The operator $ST$  has the domain $D(ST)={\{x\in D(T): Tx\in D(S)}\}$ and is defined as $(ST)(x)=S(Tx)$ for all $x\in D(ST)$.

If $S$ and $T$ are closed operators  with the property that $D(T)\subseteq D(S)$ and $Tx=Sx$ for all $x\in D(T)$, then $S$ is called the restriction of $T$ and $T$ is called an extension of $S$.

A densely defined operator $T\in \mathcal C(H)$ is said to be
\begin{enumerate}
 \item normal if $T^*T=TT^*$
  \item self-adjoint if $T=T^*$
 \item positive if  $T=T^*$ and $\langle Tx,x\rangle \geq 0$ for all $x\in D(T)$.
\end{enumerate}

Let $V\in \mathcal B(H_1,H_2)$. Then $V$ is called
\begin{enumerate}
\item an isometry if $\|Vx\|=\|x\|$ for all $x\in H_1$
\item a partial isometry if $V|_{N(V)^{\bot}}$ is an isometry. The space $N(V)^{\bot}$ is called the initial space or the initial domain and the space $R(V)$ is called the final space or the final domain of $V$.
\end{enumerate}

\begin{defn}\label{normattainingdefn}
Let $T\in \mathcal B(H_1,H_2)$. Then $T$ is said to be {\it norm attaining} if there exists $x_0\in S_{N(T)^{\bot}}\subseteq H_1$ such that $\|Tx_0\|=\|T\|$.
\end{defn}

\begin{defn}\cite{ben, goldberg}\label{minmmodulus}
 Let $T\in \mathcal C(H_1,H_2)$ be densely defined. Then
 \begin{equation*}
 m(T)=\inf{\{\|Tx\|: x\in S_{D(T)}}\}
  \end{equation*}
 is  called the {\it minimum modulus} of $T$.
\end{defn}

\begin{defn}\cite[Definition 3.1]{shkgr7}\label{minmattainingdefn}
Let $T\in \mathcal C(H_1,H_2)$ be densely defined. Then we say $T$ to be  {\it minimum attaining} if there exists $x_0\in S_{D(T)}$ such that $\|Tx_0\|=m(T)$.

\end{defn}
We denote the class of minimum attaining closed operators between $H_1$ and $H_2$ by $\mathcal M_c(H_1,H_2)$ and $\mathcal M_c(H,H)$ by $\mathcal M_c(H)$.

\begin{note}
If $T\in \mathcal C(H_1,H_2)$ is densely defined and $N(T)\neq {\{0}\}$, then $m(T)=0$ and there exists $x\in S_{N(T)}$ such that $Tx=0$. Hence $T\in \mathcal M_c(H_1,H_2)$.
\end{note}
\begin{rem}
If $T\in \mathcal C(H_1,H_2)$ is densely defined, then  $m(T)>0$ if and only if $R(T)$ is closed and $T$ is one-to-one.
\end{rem}

\begin{theorem}\label{squareroot}\cite[theorem 13.31, page 349]{rud}\cite[Theorem 4, page 144]{birmannsolomyak}
Let $T\in \mathcal C(H)$ be densely defined and positive. Then there exists a unique positive operator $S$ such that
$T=S^2$. The operator $S$ is called the square root of $T$ and is denoted by $S=T^\frac{1}{2}$.
\end{theorem}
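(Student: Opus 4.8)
The plan is to invoke the spectral theorem for (possibly unbounded) self-adjoint operators and to define the square root through the Borel functional calculus. Since $T$ is positive, its spectral measure $E$ is supported on $[0,\infty)$, so we may write $T=\int_{[0,\infty)}\lambda\,dE(\lambda)$. I would then put
\[
D(S):=\Bigl\{x\in H:\int_{[0,\infty)}\lambda\,d\langle E(\lambda)x,x\rangle<\infty\Bigr\},\qquad S:=\int_{[0,\infty)}\lambda^{1/2}\,dE(\lambda).
\]
The standard properties of the functional calculus then deliver existence: $D(S)$ is dense, $S$ is closed and self-adjoint, and $\langle Sx,x\rangle=\int_{[0,\infty)}\lambda^{1/2}\,d\langle E(\lambda)x,x\rangle\ge 0$, so $S$ is positive. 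For the identity $S^{2}=T$ one checks first that $D(T)\subseteq D(S)$ (because $\lambda\le 1+\lambda^{2}$, so $\int\lambda\,d\langle E(\lambda)x,x\rangle\le\|x\|^{2}+\int\lambda^{2}\,d\langle E(\lambda)x,x\rangle$), and then that $D(S^{2})=\{x\in D(S):Sx\in D(S)\}=\{x:\int\lambda^{2}\,d\langle E(\lambda)x,x\rangle<\infty\}=D(T)$, with $S^{2}x=\int_{[0,\infty)}\lambda\,dE(\lambda)x=Tx$ on that common domain; this is an instance of the general rule $D\bigl(f(A)g(A)\bigr)=D\bigl((fg)(A)\bigr)\cap D\bigl(g(A)\bigr)$ with $f=g=\lambda^{1/2}$.

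For uniqueness, suppose $R\in\mathcal C(H)$ is a positive operator with $R^{2}=T$, and let $F$ be its spectral measure, supported on $[0,\infty)$. Then $T=R^{2}=\int_{[0,\infty)}\mu^{2}\,dF(\mu)$. The map $\phi(\mu)=\mu^{2}$ is a Borel isomorphism of $[0,\infty)$ onto itself, and the change of variables $\int g\,d(F\circ\phi^{-1})=\int(g\circ\phi)\,dF$ together with the uniqueness clause of the spectral theorem forces $E=F\circ\phi^{-1}$, i.e.\ $E(B)=F(\{\mu\ge 0:\mu^{2}\in B\})$ for every Borel $B\subseteq[0,\infty)$. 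Hence $R=\int_{[0,\infty)}\mu\,dF(\mu)=\int_{[0,\infty)}\lambda^{1/2}\,dE(\lambda)=S$, with domains matching as well. The positivity of $R$ is precisely what pins down $\lambda\mapsto\lambda^{1/2}$ (rather than $-\lambda^{1/2}$) as the branch of $\phi^{-1}$ used here.

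The step I expect to be the main obstacle is this uniqueness argument, specifically the careful justification that $F\circ\phi^{-1}$ really is the spectral measure of $R^{2}$ in the unbounded setting — one must verify the resolution-of-identity axioms for the push-forward and the integral identity above on domains, and only then quote uniqueness of the spectral resolution. An alternative that avoids some of this is to reduce to the bounded case: first establish existence and uniqueness of a positive square root for a bounded positive operator $A$ with $\|A\|\le 1$ (for instance via the Newton-type iteration $S_{0}=0$, $S_{n+1}=S_{n}+\tfrac12(A-S_{n}^{2})$, whose norm limit lies in the bicommutant of $A$), and then transfer to unbounded positive $T$ through the bounded, injective, positive operator $(I+T)^{-1}$, exploiting that square roots are respected by its functional calculus. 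However, the passage from bounded to unbounded again needs explicit domain analysis, so the spectral-theorem route above is the more economical one.
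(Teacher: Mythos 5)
This statement is quoted in the paper without proof, with citations to Rudin (Theorem 13.31) and Birman--Solomyak, and the proof in those references is precisely the spectral-theorem/functional-calculus argument you give: define $S=\int_{[0,\infty)}\lambda^{1/2}\,dE(\lambda)$ and obtain uniqueness by pushing forward the spectral measure of a competing positive root under $\mu\mapsto\mu^{2}$. Your construction, the domain bookkeeping via $D\bigl(f(A)g(A)\bigr)=D\bigl((fg)(A)\bigr)\cap D\bigl(g(A)\bigr)$, and the uniqueness argument are correct, so the proposal matches the standard (cited) proof.
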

For $T\in \mathcal C(H_1,H_2)$, the operator  $|T|:=(T^*T)^\frac{1}{2}$ is called the modulus of $T$. Moreover,  $D(|T|)=D(T),\; N(|T|)=N(T)$ and $\overline{R(|T|)}=\overline{R(T^*)}$.
 As $\|Tx\|=\||T|x\|$ for all $x\in D(T)$,  we can conclude that  $m(T)=m(|T|)$ and $T\in \mathcal M_c(H_1,H_2)$ if and only if $|T|\in \mathcal M_c(H_1)$.

\begin{theorem}\cite[Theorem 2, page 184]{birmannsolomyak}
Let $T\in \mathcal C(H_1,H_2)$ be densely defined. Then there exists a unique partial isometry $V:H_1\rightarrow H_2$ with the initial space $\overline{R(T^*)}$ and the final space $\overline{R(T)}$ such that $T=V|T|$.
\end{theorem}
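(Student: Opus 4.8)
The plan is to carry out the usual bounded-operator construction of the polar decomposition for the assignment $|T|x\mapsto Tx$, being careful with domains. Recall the facts already recorded: $D(|T|)=D(T)$, $N(|T|)=N(T)$, $\overline{R(|T|)}=\overline{R(T^*)}$, and $\||T|x\|=\|Tx\|$ for all $x\in D(T)$. Since $|T|$ is positive and self-adjoint (Theorem~\ref{squareroot}), we have $N(|T|)^\bot=\overline{R(|T|)}$, hence the orthogonal decomposition $H_1=N(T)\oplus^\bot\overline{R(T^*)}$.

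First I would define $V_0$ on the subspace $R(|T|)\subseteq\overline{R(T^*)}$ by $V_0(|T|x):=Tx$ for $x\in D(T)$. This is well defined, since $|T|x=|T|y$ forces $x-y\in N(|T|)=N(T)$ and hence $Tx=Ty$; it is linear; and it is isometric because $\|V_0(|T|x)\|=\|Tx\|=\||T|x\|$. An isometry on a dense subspace of $\overline{R(T^*)}$ extends uniquely to an isometry $\widetilde V\colon\overline{R(T^*)}\to H_2$, and since $V_0$ maps onto $R(T)$, which is dense in $\overline{R(T)}$, the isometry $\widetilde V$ maps $\overline{R(T^*)}$ onto $\overline{R(T)}$.

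Next I would extend $\widetilde V$ to all of $H_1$ by zero on $\overline{R(T^*)}^\bot=N(T)$, i.e. set $V:=\widetilde V\,P_{\overline{R(T^*)}}$. Then $\|V\|\le 1$; moreover $Vy=0$ iff $P_{\overline{R(T^*)}}y=0$ (as $\widetilde V$ is injective) iff $y\in N(T)$, so $N(V)=N(T)$ and $N(V)^\bot=\overline{R(T^*)}$; on $N(V)^\bot$ the map $V$ coincides with the isometry $\widetilde V$; and $R(V)=R(\widetilde V)=\overline{R(T)}$. Thus $V$ is a partial isometry with initial space $\overline{R(T^*)}$ and final space $\overline{R(T)}$. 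For the factorization, if $x\in D(T)=D(|T|)$ then $|T|x\in R(|T|)\subseteq\overline{R(T^*)}$, so $P_{\overline{R(T^*)}}(|T|x)=|T|x$ and $V|T|x=\widetilde V(|T|x)=V_0(|T|x)=Tx$; since $V$ is everywhere defined, $D(V|T|)=D(|T|)=D(T)$, so $T=V|T|$ as operators.

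Uniqueness is the last step. If $W$ is another partial isometry with initial space $\overline{R(T^*)}$, final space $\overline{R(T)}$, and $W|T|=T$, then $W(|T|x)=Tx=V_0(|T|x)$ for all $x\in D(T)$, so $W$ and $V$ agree on the dense subspace $R(|T|)$ of $\overline{R(T^*)}$, hence on $\overline{R(T^*)}$ by continuity; and both vanish on the common null space $\overline{R(T^*)}^\bot$; therefore $W=V$. The only points requiring genuine care are the decomposition $H_1=N(T)\oplus^\bot\overline{R(T^*)}$ (which rests on the self-adjointness of $|T|$) and the domain bookkeeping in $T=V|T|$; everything else is the classical bounded-operator argument applied to $V_0$.
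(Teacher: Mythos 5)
Your proof is correct, and since the paper only cites this theorem from Birman--Solomyak without giving a proof, there is no in-paper argument to compare against; what you write is precisely the classical construction (the isometry $|T|x\mapsto Tx$ on $R(|T|)$, its continuous extension to $\overline{R(|T|)}=\overline{R(T^*)}$, the zero extension on $N(T)=\overline{R(T^*)}^{\bot}$, the domain bookkeeping for $T=V|T|$, and uniqueness by density of $R(|T|)$ in the initial space). The facts you use as given --- $D(|T|)=D(T)$, $N(|T|)=N(T)$, $\overline{R(|T|)}=\overline{R(T^*)}$ and $\||T|x\|=\|Tx\|$ for all $x\in D(T)$ --- are exactly those recorded in the paper right after Theorem \ref{squareroot}, so the argument is complete as it stands.
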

\begin{defn}\cite[page 346]{rud}
 Let $T\in \mathcal C(H)$ be densely defined. The resolvent of $T$ is defined by
 \begin{equation*}
 \rho(T):={\{\lambda \in \mathbb C: T-\lambda I:D(T)\rightarrow H\; \text{is invertible and}\; (T-\lambda I)^{-1}\in \mathcal B(H)}\}
 \end{equation*}
and
\begin{align*}
\sigma(T):&=\mathbb C \setminus \rho(T)\\
\sigma_p(T):&={\{\lambda \in \mathbb C: T-\lambda I:D(T)\rightarrow H \; \text{is not one-to-one}}\},
\end{align*}
are called the  spectrum and the point spectrum of $T$, respectively.
\end{defn}

\begin{defn}\cite[Definition 8.3 page 178]{schmudgen}
Let $T=T^* \in \mathcal{C}(H)$. Then the \textit{discrete spectrum} $\sigma_d(T)$ of $T$ is defined as the set of all eigenvalues of $T$ with finite multiplicities which are isolated points of the spectrum $\sigma(T)$ of $T$. The complement set $\sigma_{ess}(T):=\sigma(T)\setminus \sigma_d(T)$ is called the \textit{essential spectrum} of $T$.
\end{defn}

The essential spectrum is stable under compact perturbations.

\begin{prop}[Weyl's theorem]\label{weyltheorem}\cite[Corollary 8.16, page 182]{schmudgen}
Let $A\in \mathcal C(H)$ be self-adjoint and $C$ is compact, self-adjoint. Then $\sigma_{ess}(A+C)=\sigma_{ess}(A)$.
\end{prop}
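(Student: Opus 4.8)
The plan is to reduce the statement to the \emph{singular sequence} (or Weyl sequence) characterisation of the essential spectrum of a self-adjoint operator. Recall that for a self-adjoint $B\in\mathcal C(H)$ and $\lambda\in\mathbb R$, one has $\lambda\in\sigma_{ess}(B)$ if and only if there is a sequence $(x_n)$ in $D(B)$ with $\|x_n\|=1$ for all $n$, $x_n\rightharpoonup 0$ weakly, and $\|(B-\lambda)x_n\|\to 0$; call such a $(x_n)$ a singular sequence for $B$ at $\lambda$. I would first record a proof of this criterion using the spectral theorem for (possibly unbounded) self-adjoint operators. For the ``only if'' direction, given $\lambda\in\sigma_{ess}(B)$ one produces an \emph{orthonormal} singular sequence: if $\lambda$ is a non-isolated point of $\sigma(B)$, choose pairwise disjoint bounded intervals $I_k$ shrinking to $\lambda$ with $E(I_k)\neq 0$ (where $E$ is the spectral measure of $B$) and take unit vectors $x_k\in R(E(I_k))$; if $\lambda$ is an isolated point of $\sigma(B)$ it is necessarily an eigenvalue, which must then have infinite multiplicity, so one takes an orthonormal system of eigenvectors. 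In either case orthonormality forces $x_k\rightharpoonup 0$, while $\|(B-\lambda)x_k\|^2=\int|t-\lambda|^2\,d\langle E(t)x_k,x_k\rangle\to 0$. For the ``if'' direction, if $\lambda\notin\sigma_{ess}(B)$ then either $\lambda\in\rho(B)$, so $(B-\lambda)^{-1}$ is bounded and $\|(B-\lambda)x_n\|\geq\|(B-\lambda)^{-1}\|^{-1}$ rules out a singular sequence, or $\lambda$ is an isolated eigenvalue of finite multiplicity, in which case $P:=E(\{\lambda\})$ has finite rank, $\|Px_n\|\to 0$ for every weakly null sequence, and $B-\lambda$ is bounded below on $R(I-P)$ by $\mathrm{dist}\big(\lambda,\sigma(B)\setminus\{\lambda\}\big)>0$, again excluding a singular sequence.

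Granting the criterion, the proof is short. Since $C$ is bounded and self-adjoint and $A$ is self-adjoint, $A+C$ is self-adjoint with $D(A+C)=D(A)$, so the criterion applies to both $A$ and $A+C$. Let $\lambda\in\sigma_{ess}(A)$ and pick a singular sequence $(x_n)\subseteq D(A)$ for $A$ at $\lambda$. Because $C$ is compact and $x_n\rightharpoonup 0$, we have $\|Cx_n\|\to 0$, whence
\[
\|(A+C-\lambda)x_n\|\leq \|(A-\lambda)x_n\|+\|Cx_n\|\longrightarrow 0,
\]
while $(x_n)$ remains a unit, weakly null sequence in $D(A)=D(A+C)$. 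Thus $(x_n)$ is a singular sequence for $A+C$ at $\lambda$, so $\lambda\in\sigma_{ess}(A+C)$; this gives $\sigma_{ess}(A)\subseteq\sigma_{ess}(A+C)$.

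For the reverse inclusion, note that $-C$ is again compact and self-adjoint and $A=(A+C)+(-C)$, so applying what has just been proved with $A+C$ in place of $A$ yields $\sigma_{ess}(A+C)\subseteq\sigma_{ess}\big((A+C)-C\big)=\sigma_{ess}(A)$. Combining the two inclusions gives $\sigma_{ess}(A+C)=\sigma_{ess}(A)$.

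I expect the only genuine obstacle to be the first step, i.e.\ setting up the singular-sequence characterisation of $\sigma_{ess}$ for unbounded self-adjoint operators: it is the sole place where the spectral theorem is really used, and both directions — excluding a singular sequence when $\lambda\notin\sigma_{ess}(B)$ and constructing one when $\lambda\in\sigma_{ess}(B)$ — require some care with the spectral measure. Once that lemma is in hand, the compactness argument that finishes the proof is entirely routine.
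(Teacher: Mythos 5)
Your argument is correct. Note first that the paper itself offers no proof of this proposition: it is quoted verbatim from Schm\"udgen's book (Corollary 8.16 there), where it is deduced from a more general invariance theorem for the essential spectrum under perturbations whose resolvent difference is compact, the Weyl singular-sequence criterion being the underlying tool. Your route is the classical direct one: establish the singular-sequence characterisation of $\sigma_{ess}$ for an unbounded self-adjoint operator and then observe that a compact $C$ annihilates weakly null sequences in norm, so any singular sequence for $A$ at $\lambda$ is one for $A+C$, with the reverse inclusion obtained by replacing $C$ by $-C$. This avoids resolvents and the second resolvent identity altogether, which is a genuine simplification available because here $C$ is a bounded (not merely relatively) compact perturbation; the price is that you must prove the criterion itself, and your sketch of it is sound: disjoint bounded spectral intervals accumulating at a non-isolated spectral point give an orthonormal (hence weakly null) singular sequence lying in $D(B)$, isolated points of $\sigma_{ess}$ are eigenvalues of infinite multiplicity, and conversely points of $\rho(B)$ or of $\sigma_d(B)$ are excluded by the lower bound $\|(B-\lambda)x_n\|\geq \mathrm{dist}\bigl(\lambda,\sigma(B)\setminus\{\lambda\}\bigr)\,\|(I-P)x_n\|$ together with $\|Px_n\|\to 0$ for the finite-rank $P=E(\{\lambda\})$. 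Two small points worth making explicit in a write-up: $A+C$ is self-adjoint on $D(A)$ because $C$ is bounded and self-adjoint, so the criterion applies to both operators on the same domain; and only real $\lambda$ need be considered since both spectra are real.
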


\begin{lemma}\cite{schock,gr92,ped}\label{boundedclosure}
 Let $T\in \mathcal C(H_1,H_2)$ be densely defined. Denote $\check T=(I+T^*T)^{-1}$ and $\widehat T=(I+TT^*)^{-1}$. Then
\begin{enumerate}
\item $\check T\in \mathcal B(H_1)$, $\widehat T\in \mathcal
B(H_2)$
\item $\widehat TT\subseteq T\check T$,\quad $||T\check T||\leq \displaystyle
\frac{1}{2}$ and $\check TT^*\subseteq T^*\widehat T$,\quad
$||T^*\widehat T||\leq \displaystyle \frac{1}{2}$
\item \label{contfnlcalculus-groetsch} if $g:[0,1]\rightarrow \mathbb C$ is a continuous function, then
\begin{itemize}
\item[(a)] $T^*g(\Hat {T})y=g(\check{T})T^*y$ for all $y\in D(T^*)$
\item[(b)] $ Tg(\check{T})x=g(\Hat{T})Tx$ for all $x\in D(T)$.
\end{itemize}
\end{enumerate}
\end{lemma}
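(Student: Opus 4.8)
The plan is to establish the three items in order, since (2) uses (1) and (3) uses (2). The one substantial ingredient is von Neumann's theorem: for a densely defined closed $T$ the operator $T^*T$ with domain $D(T^*T)=\{x\in D(T):Tx\in D(T^*)\}$ is positive and self-adjoint, $D(T^*T)$ is a core for $T$, and in particular $D(T^*T)\subseteq D(T)$. For (1) I would show that $I+T^*T$ maps $D(T^*T)$ bijectively onto $H_1$ with inverse of norm $\leq 1$. Surjectivity comes from the orthogonal decomposition $H_1\times H_2=G(T)\oplus G(T)^{\perp}$, valid because $G(T)$ is closed: in the identification $G(T)=\{(x,Tx):x\in D(T)\}$ one has $G(T)^{\perp}=\{(-T^*y,y):y\in D(T^*)\}$, and decomposing $(h,0)$ accordingly produces $x\in D(T^*T)$ and $y=-Tx\in D(T^*)$ with $h=x+T^*Tx$. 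Injectivity and the bound follow from $\|(I+T^*T)x\|^{2}=\|x\|^{2}+2\|Tx\|^{2}+\|T^*Tx\|^{2}\geq\|x\|^{2}$. Thus $\check T=(I+T^*T)^{-1}\in\mathcal B(H_1)$; a direct computation (using $\langle x,T^*Tu\rangle=\langle Tx,Tu\rangle$ for $x\in D(T)$, $u\in D(T^*T)$) shows $\check T$ is self-adjoint and positive with $\sigma(\check T)\subseteq[0,1]$. Applying the same reasoning to the closed densely defined operator $T^*$ (and $T^{**}=T$) gives $\widehat T=(I+TT^*)^{-1}\in\mathcal B(H_2)$ with the analogous properties.

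For (2), since $R(\check T)=D(T^*T)\subseteq D(T)$ the operator $T\check T$ is everywhere defined. For $x\in D(T)$ put $u=\check T x\in D(T^*T)$, so $u+T^*Tu=x$; then $T^*Tu=x-u\in D(T)$, which is exactly the condition $Tu\in D(TT^*)$, and applying $T$ to $u+T^*Tu=x$ yields $(I+TT^*)(Tu)=Tx$, i.e.\ $T\check T x=\widehat T Tx$ for all $x\in D(T)$, hence $\widehat T T\subseteq T\check T$. The relation $\check T T^*\subseteq T^*\widehat T$ is the same statement applied to $T^*$ (which interchanges $\check T$ and $\widehat T$). For the norm bounds, note that $T^*T\check T=I-\check T$ as bounded operators on $H_1$, so $(T\check T)^{*}(T\check T)=\check T\,T^*T\,\check T=\check T(I-\check T)$; since $\sigma(\check T)\subseteq[0,1]$ and $\sup_{0\leq s\leq 1}s(1-s)=\tfrac{1}{4}$, we get $\|T\check T\|^{2}\leq\tfrac{1}{4}$, and likewise $\|T^*\widehat T\|\leq\tfrac{1}{2}$.

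For (3), iterating the intertwining relation of (2) gives $T\check{T}^{n}x=\widehat{T}^{n}Tx$ for every $x\in D(T)$ and $n\geq 0$ (using $R(\check T)\subseteq D(T)$ repeatedly), hence $Tp(\check T)x=p(\widehat T)Tx$ for every polynomial $p$. By Weierstrass choose polynomials $p_k\to g$ uniformly on $[0,1]$; then, by the continuous functional calculus for the self-adjoint operators $\check T,\widehat T$ whose spectra lie in $[0,1]$, $p_k(\check T)\to g(\check T)$ and $p_k(\widehat T)\to g(\widehat T)$ in operator norm. Fixing $x\in D(T)$, the vectors $z_k:=p_k(\check T)x\in D(T)$ satisfy $z_k\to g(\check T)x$ and $Tz_k=p_k(\widehat T)Tx\to g(\widehat T)Tx$; closedness of $T$ then forces $g(\check T)x\in D(T)$ and $Tg(\check T)x=g(\widehat T)Tx$, which is (b). Part (a) follows from (b) by replacing $T$ with its adjoint $T^*$ and using $T^{**}=T$, which interchanges the roles of $\check T$ and $\widehat T$.

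I expect the main obstacle to be the domain bookkeeping rather than any deep idea: in (2) one must check that the intermediate vectors genuinely lie in $D(TT^*)$ (respectively $D(T^*T)$) before $T$ (respectively $T^*$) may be applied, and in (3) the closed-graph limiting argument must be run on a sequence that stays inside $D(T)$. Item (1) is the genuine analytic content, but it is entirely standard.
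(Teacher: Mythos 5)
The paper does not prove this lemma at all: it is quoted from the literature (the references \cite{schock,gr92,ped}), so there is no internal proof to compare with. Your argument is a correct and essentially self-contained proof along the standard lines: von Neumann's theorem via the orthogonal decomposition $H_1\times H_2=G(T)\oplus G(T)^{\perp}$ gives (1) together with self-adjointness, positivity and $\sigma(\check T)\subseteq[0,1]$; the domain bookkeeping in (2) (checking $Tu\in D(TT^*)$ before applying $T$) is exactly right, and the bound $\|T\check T\|\le\tfrac12$ follows from $\sup_{0\le s\le1}s(1-s)=\tfrac14$ as you say; for (3), iterating the intertwining relation to polynomials and passing to a uniform limit using closedness of $T$ is the standard Gro\"{e}tsch-type argument, and deducing (a) from (b) by replacing $T$ with $T^*$ (which swaps $\check T$ and $\widehat T$) is correct. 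Two cosmetic remarks: the paper's convention writes $G(T)=\{(Tx,x)\}$ rather than $\{(x,Tx)\}$, which changes nothing; and in (2) the operator identity $(T\check T)^{*}(T\check T)=\check T\,T^{*}T\,\check T$ is most safely justified pointwise, via $\|T\check Th\|^{2}=\langle T^{*}T\check Th,\check Th\rangle=\langle(I-\check T)h,\check Th\rangle$ for $h\in H_1$, which needs no statement about the adjoint of the product (alternatively, note $T\check T$ is closed and everywhere defined, hence bounded, before invoking the $C^{*}$-identity).
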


\subsection{Gap Metric}

 The gap between closed subspaces $M$ and $N$ of $H$ is defined by $\theta(M,N):=\|P_M-P_N\|$. This defines a metric on the class of closed subspaces of $H$,
known as the gap metric. The topology induced by the gap metric is known as the gap topology. We have the following alternative formula for the gap;

 \begin{equation*}
 \theta(M,N):=\max
\Big\{||P_M(I-P_N)||,||P_N(I-P_M)||\Big\}.
\end{equation*}
For the details we refer to \cite[Page 70]{akh}.

Let   $A,B\in \mathcal C(H_1,H_2)$ be densely defined.  Then $G(A)$ and $G(B)$ are closed subspaces of $H_1\times H_2$.
The gap between $A$ and $B$ is defined as

\begin{equation*}
\theta(A,B)=\|P_{G(A)}-P_{G(B)}\|,
\end{equation*}
where $P_{M}:H_1\times H_2 \rightarrow H_1\times H_2$, is an orthogonal projection onto the closed subspace $M$ of $H_1\times H_2$.  This defines a  metric on the class of closed operators and induced
 topology is known as the gap topology. The gap topology restricted to the space of bounded linear operators coincides with the norm topology.
Also the convergence with respect to the gap metric on the set of self-adjoint bounded operators coincide with  the resolvent convergence \cite[Chapter VII, page 235]{reedsimon}).

We have the following formula for the gap between two closed operators;

\begin{theorem}\cite{shkgrgap}\label{shkgrgapformula}
  Let $S,T\in \mathcal C(H_1,H_2)$ be densely defined. Then the operators $\widehat{T}^{\frac{1}{2}}S\check{S}^{\frac{1}{2}},\; T\check{T}^{\frac{1}{2}}\check{S}^{\frac{1}{2}},\; S\check{S}^{\frac{1}{2}}\check{T}^{\frac{1}{2}}$ and $\widehat{S}^{\frac{1}{2}}T\check{T}^{\frac{1}{2}}$ are bounded and
  \begin{equation}
  \theta(S,T)=\max{\{\left  \|   T\check{T}^{\frac{1}{2}}\check{S}^{\frac{1}{2}}-\widehat{T}^{\frac{1}{2}}S\check{S}^{\frac{1}{2}}\right \|, \|S\check{S}^{\frac{1}{2}}\check{T}^{\frac{1}{2}}-\widehat{S}^{\frac{1}{2}}T\check{T}^{\frac{1}{2}}\|}\}.
  \end{equation}
\end{theorem}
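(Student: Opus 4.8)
The plan is to realise both graph projections in the form $UU^{*}$ for suitable isometries $U$, and then to read off the gap by factoring the operator whose norm computes $\theta(S,T)$ as an isometry composed with one of the two displayed differences. Throughout, put $C_{T}:=T\check T^{1/2}$ and $C_{S}:=S\check S^{1/2}$.

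\textbf{Step 1: an isometric parametrisation of the graph.} Using Lemma~\ref{boundedclosure} (in particular part~(3)) together with the functional calculus for the positive self-adjoint operators $T^{*}T$ and $TT^{*}$, one verifies that $C_{T}\in\mathcal B(H_{1},H_{2})$ with $\|C_{T}\|\le 1$, that $C_{T}=\overline{\widehat T^{1/2}T}$ and $C_{T}^{*}=\overline{\check T^{1/2}T^{*}}=\overline{T^{*}\widehat T^{1/2}}$, and that
\[
C_{T}^{*}C_{T}=I-\check T,\qquad C_{T}C_{T}^{*}=I-\widehat T,\qquad C_{T}\check T^{1/2}=\widehat T^{1/2}C_{T},\qquad C_{T}^{*}\widehat T^{1/2}=\check T^{1/2}C_{T}^{*},
\]
and similarly with $S$ in place of $T$. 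Define $U_{T}\colon H_{1}\to H_{1}\times H_{2}$ by $U_{T}x=(\check T^{1/2}x,\,C_{T}x)$. This is well defined because $\check T^{1/2}x\in D(|T|)=D(T)$, and $\|U_{T}x\|^{2}=\langle\check Tx,x\rangle+\langle C_{T}^{*}C_{T}x,x\rangle=\langle\check Tx,x\rangle+\langle(I-\check T)x,x\rangle=\|x\|^{2}$, so $U_{T}$ is an isometry. Its range equals $G(T)$: the inclusion $R(U_{T})\subseteq G(T)$ is immediate, and conversely, for $x\in D(T)$ the vector $y:=(I+T^{*}T)^{1/2}x\in H_{1}$ satisfies $U_{T}y=(x,Tx)$. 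Hence $P_{G(T)}=U_{T}U_{T}^{*}$, and likewise $P_{G(S)}=U_{S}U_{S}^{*}$ with $U_{S}x=(\check S^{1/2}x,\,C_{S}x)$.

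\textbf{Step 2: computing $\theta(S,T)$.} Since $U_{S}$ is an isometry, $\|U_{S}U_{S}^{*}w\|=\|U_{S}^{*}w\|$ for every $w$, whence $\|P_{G(S)}(I-P_{G(T)})\|=\|U_{S}^{*}(I-U_{T}U_{T}^{*})\|=\|(I-U_{T}U_{T}^{*})U_{S}\|$, and symmetrically for the other term. Combining this with the formula $\theta(M,N)=\max\{\|P_{M}(I-P_{N})\|,\|P_{N}(I-P_{M})\|\}$ recalled above,
\[
\theta(S,T)=\max\big\{\,\|(I-U_{S}U_{S}^{*})U_{T}\|,\ \|(I-U_{T}U_{T}^{*})U_{S}\|\,\big\}.
\]
Now $U_{T}^{*}U_{S}=\check T^{1/2}\check S^{1/2}+C_{T}^{*}C_{S}$, and expanding $(I-U_{T}U_{T}^{*})U_{S}=U_{S}-U_{T}(U_{T}^{*}U_{S})$ coordinatewise and applying the identities of Step~1 (for the first coordinate, $I-\check T=C_{T}^{*}C_{T}$ and $\check T^{1/2}C_{T}^{*}=C_{T}^{*}\widehat T^{1/2}$; for the second, $C_{T}\check T^{1/2}=\widehat T^{1/2}C_{T}$ and $C_{T}C_{T}^{*}=I-\widehat T$) yields
\[
(I-U_{T}U_{T}^{*})U_{S}=\begin{pmatrix}C_{T}^{*}\\ -\widehat T^{1/2}\end{pmatrix}\big(C_{T}\check S^{1/2}-\widehat T^{1/2}C_{S}\big)=\begin{pmatrix}C_{T}^{*}\\ -\widehat T^{1/2}\end{pmatrix}\big(T\check T^{1/2}\check S^{1/2}-\widehat T^{1/2}S\check S^{1/2}\big).
\]
The column $\big(\begin{smallmatrix}C_{T}^{*}\\ -\widehat T^{1/2}\end{smallmatrix}\big)\colon H_{2}\to H_{1}\times H_{2}$ is an isometry, because $\|C_{T}^{*}y\|^{2}+\|\widehat T^{1/2}y\|^{2}=\langle(I-\widehat T)y,y\rangle+\langle\widehat Ty,y\rangle=\|y\|^{2}$, so composing with it leaves operator norms unchanged. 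Hence $\|(I-U_{T}U_{T}^{*})U_{S}\|=\|T\check T^{1/2}\check S^{1/2}-\widehat T^{1/2}S\check S^{1/2}\|$, and interchanging the roles of $S$ and $T$ gives $\|(I-U_{S}U_{S}^{*})U_{T}\|=\|S\check S^{1/2}\check T^{1/2}-\widehat S^{1/2}T\check T^{1/2}\|$. Together with the display above this is exactly the asserted identity, and boundedness of the four operators in the statement is already contained in Step~1, each being a product of bounded operators.

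\textbf{Main obstacle.} The substantive part is the unbounded-operator bookkeeping in Step~1: verifying that $\check T^{1/2}$ maps $H_{1}$ into $D(T)$, that $C_{T}$, $C_{T}^{*}$ and the various products extend to everywhere-defined bounded operators with the stated norms, and that the four square/commutation relations hold. All of this follows from Lemma~\ref{boundedclosure} and the functional calculus for $T^{*}T$ and $TT^{*}$ (equivalently, from the polar decomposition $T=V|T|$). Once those identities are secured, the rest is routine $2\times2$ operator-matrix algebra on $H_{1}\times H_{2}$.
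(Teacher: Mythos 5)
Your argument is correct. Note, however, that the paper does not prove this statement at all: Theorem \ref{shkgrgapformula} is imported from the reference \cite{shkgrgap}, so there is no in-text proof to compare against. What you have written is a self-contained derivation in the spirit of that source: you realise each graph projection as $U U^{*}$ for the explicit isometry $U_{T}x=(\check T^{1/2}x,\,T\check T^{1/2}x)$ (essentially Stone's representation of $P_{G(T)}$ via the bounded transform $C_{T}=T\check T^{1/2}$, with $C_{T}^{*}C_{T}=I-\check T$, $C_{T}C_{T}^{*}=I-\widehat T$ and the intertwining relations supplied by Lemma \ref{boundedclosure}(3)), then reduce $\|P_{G(S)}(I-P_{G(T)})\|$ to $\|(I-U_{T}U_{T}^{*})U_{S}\|$ and factor that operator through the second isometry $y\mapsto(C_{T}^{*}y,-\widehat T^{1/2}y)$, which cleanly isolates the difference $T\check T^{\frac12}\check S^{\frac12}-\widehat T^{\frac12}S\check S^{\frac12}$. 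I checked the coordinatewise expansion and the two isometry verifications; they are correct, and the boundedness of the four operators in the statement indeed follows from $\|C_{T}\|\le 1$, $\|C_{S}\|\le 1$. The only cosmetic mismatch is the ordering convention for the graph (the paper writes $G(T)=\{(Tx,x)\}$, evidently a typo, while you use $\{(x,Tx)\}$), which does not affect the gap. So your proposal is a valid proof of the quoted formula, and arguably a useful addition since the present paper relies on it as a black box.
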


\section{Denseness of minimum attaining operators}
In this section we discuss the denseness of minimum attaining operators. First let us consider the case of functionals:

Let $H$ be a Hilbert space and $\phi: H\rightarrow \mathbb C$ be a non zero linear functional. Then $\phi$ is continuous (bounded) if and only if $\phi$ is closed. Since $H$ is infinite dimensional and $H/N(\phi)^{\bot}$ is isomorphic with $\mathbb C$, we can clearly conclude that $N(\phi)\neq {\{0}\}$. Hence $\phi$ is minimum attaining. Thus, the class of minimum attaining bounded linear functionals coincide with the space of all bounded linear functionals. So in this case, the minimum attaining functionals are dense.
If $H$ is finite dimensional, then clearly every linear functional is minimum attaining. Hence in this case also the result holds trivially.

In  the above discussion we can replace  Hilbert space by a Banach space. By a theorem of James we can conclude that a normed linear space $X$ is reflexive if and only if every non zero bounded linear functional is norm attaining (see \cite{james1,james2} for details ).  This is no more true if we replace the norm attaining property of functionals by minimum attaining property, as we have noted in the above paragraph.

Now we consider the case of densely defined closed operators defined between two different Hilbert spaces.
    We prove that the set of all minimum attaining densely defined closed operators defined between two Hilbert spaces is dense in the class of all densely defined closed operators with respect to the gap metric. First, we prove a key result related to the gap between two closed operators.

\begin{theorem}\label{specialgapformula} Let $S,T\in \mathcal C(H_1,H_2)$ be densely defined and $D(S)=D(T)$. Then
\begin{enumerate}
\item the operators
$\widehat T^\frac{1}{2}(T-S) \check S^\frac{1}{2}$ and $\widehat S^\frac{1}{2}(S-T) \check T^\frac{1}{2}$  are bounded and
\begin{equation*}
  \theta(S,T)=\max \; \Big\{\| \widehat T^\frac{1}{2}(T-S) \check S^\frac{1}{2}\|,\; \| \widehat S^\frac{1}{2}(S-T) \check T^\frac{1}{2}\| \Big\}
  \end{equation*}

\item if $T-S$ is bounded, then $\theta(S,T)\leq \|S-T\|$.
\end{enumerate}
\end{theorem}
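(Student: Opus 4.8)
The plan is to obtain both statements directly from the general gap formula of Theorem~\ref{shkgrgapformula}, using the hypothesis $D(S)=D(T)$ to simplify each of the four bounded operators occurring there by means of the functional calculus identities in Lemma~\ref{boundedclosure}~(3)(b). The one preliminary fact I would establish is that $R(\check S^{\frac12})=D(S)$ and $R(\check T^{\frac12})=D(T)$: since $\check S^{\frac12}=(I+S^*S)^{-\frac12}$ is the bounded inverse of the positive self-adjoint operator $(I+S^*S)^{\frac12}$, its range equals $D\bigl((I+S^*S)^{\frac12}\bigr)$, and the spectral theorem for $S^*S$ together with $D(|S|)=D(S)$ gives $D\bigl((I+S^*S)^{\frac12}\bigr)=D\bigl((S^*S)^{\frac12}\bigr)=D(S)$; the argument for $T$ is identical. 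Under $D(S)=D(T)$ this means $\check S^{\frac12}$ and $\check T^{\frac12}$ both carry $H_1$ into $D(S)\cap D(T)=D(S-T)$, which is exactly what allows one to split $(T-S)x$ as $Tx-Sx$ and to apply Lemma~\ref{boundedclosure}~(3)(b) on these ranges.

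For part~(1), I would fix $z\in H_1$ and put $x=\check S^{\frac12}z\in D(T)$. Lemma~\ref{boundedclosure}~(3)(b) with $g(t)=\sqrt t$ gives $T\check T^{\frac12}x=\widehat T^{\frac12}Tx$ (note $\check T^{\frac12}x\in R(\check T^{\frac12})=D(T)$, so the left side is defined), hence $T\check T^{\frac12}\check S^{\frac12}z=\widehat T^{\frac12}T\check S^{\frac12}z$; subtracting $\widehat T^{\frac12}S\check S^{\frac12}z$ and using $x\in D(S)\cap D(T)$ to write $(T-S)x=Tx-Sx$ yields
\[
T\check T^{\frac12}\check S^{\frac12}z-\widehat T^{\frac12}S\check S^{\frac12}z=\widehat T^{\frac12}(T-S)\check S^{\frac12}z .
\]
The symmetric computation, applied to $S$ with $y=\check T^{\frac12}z\in D(S)$, gives $S\check S^{\frac12}\check T^{\frac12}z-\widehat S^{\frac12}T\check T^{\frac12}z=\widehat S^{\frac12}(S-T)\check T^{\frac12}z$. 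Since by Theorem~\ref{shkgrgapformula} the operators on the left-hand sides are bounded, so are $\widehat T^{\frac12}(T-S)\check S^{\frac12}$ and $\widehat S^{\frac12}(S-T)\check T^{\frac12}$, and substituting these two identities back into Theorem~\ref{shkgrgapformula} gives the claimed expression for $\theta(S,T)$.

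For part~(2), assume $T-S$ is bounded and recall $R(\check S^{\frac12})\subseteq D(S-T)=D(S)$. For $z\in H_1$ we have $\|(T-S)\check S^{\frac12}z\|\le\|S-T\|\,\|\check S^{\frac12}z\|\le\|S-T\|\,\|z\|$, because $0\le\check S=(I+S^*S)^{-1}\le I$ forces $\|\check S^{\frac12}\|\le1$; as also $\|\widehat T^{\frac12}\|\le1$, we obtain $\|\widehat T^{\frac12}(T-S)\check S^{\frac12}\|\le\|S-T\|$, and by the same reasoning $\|\widehat S^{\frac12}(S-T)\check T^{\frac12}\|\le\|S-T\|$; part~(1) then gives $\theta(S,T)\le\|S-T\|$. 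The only place demanding care is the domain bookkeeping of the first paragraph — verifying that $\check S^{\frac12}$ and $\check T^{\frac12}$ land inside $D(S)\cap D(T)$ so that the additivity of $T-S$ and the identity of Lemma~\ref{boundedclosure}~(3)(b) can be applied termwise; granting that, the rest is a direct substitution into Theorem~\ref{shkgrgapformula} and an elementary norm bound.
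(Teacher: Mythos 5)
Your proposal is correct and follows essentially the same route as the paper: simplify the four operators in Theorem~\ref{shkgrgapformula} via Lemma~\ref{boundedclosure}~(3) using $D(S)=D(T)$ to obtain $\widehat T^{\frac12}(T-S)\check S^{\frac12}$ and $\widehat S^{\frac12}(S-T)\check T^{\frac12}$, then bound each by $\|S-T\|$ using $\|\widehat T^{\frac12}\|\leq 1$ and $\|\check S^{\frac12}\|\leq 1$. Your explicit verification that $R(\check S^{\frac12})=D(S)$ is a useful bit of domain bookkeeping that the paper leaves implicit, but it does not change the argument.
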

\begin{proof}
First we simplify the term $T\check{T}^{\frac{1}{2}}\check{S}^{\frac{1}{2}}$. For $x\in H_1$, we have that  $\check{S}^{\frac{1}{2}}x\in D(S)=D(T)$, consequently,
$T\check{T}^{\frac{1}{2}}\check{S}^{\frac{1}{2}}x=\Hat{T}^{\frac{1}{2}}T\check{S}^{\frac{1}{2}}x$, by (\ref{contfnlcalculus-groetsch}) of Lemma \ref{boundedclosure}. Thus,
\begin{align}
 T\check{T}^{\frac{1}{2}}\check{S}^{\frac{1}{2}}-\widehat{T}^{\frac{1}{2}}S\check{S}^{\frac{1}{2}}x&=\left(\Hat{T}^{\frac{1}{2}}T\check{S}^{\frac{1}{2}}-\widehat{T}^{\frac{1}{2}}S\check{S}^{\frac{1}{2}}\right)x\\
                                                                                                                                                                         &=\widehat T^\frac{1}{2}(T-S) \check S^\frac{1}{2}x.
\end{align}
With a similar argument we can show that
\begin{equation*}
S\check{S}^{\frac{1}{2}}\check{T}^{\frac{1}{2}}-\widehat{S}^{\frac{1}{2}}T\check{T}^{\frac{1}{2}}= \widehat S^\frac{1}{2}(S-T)\check T^\frac{1}{2}.
\end{equation*}
Now the conclusion follows by Theorem \ref{shkgrgapformula}.

If $T-S$ is bounded, then
\begin{equation}
\| \widehat T^\frac{1}{2}(T-S) \check S^\frac{1}{2}\|\leq \| \widehat T^\frac{1}{2}\|\, \|T-S\|\, \|\check S^\frac{1}{2}\|\leq \|T-S\|.
\end{equation}

Similarly, we can conclude that $\|\widehat S^\frac{1}{2}(S-T)\check T^\frac{1}{2}\|\leq \|T-S\|$.

Hence  by the above two observations the conclusion follows.
\end{proof}

\begin{prop}\cite[Propositions 3.8, 3.9]{shkgr7}
Let $T\in \mathcal C(H)$ be positive. Then
\begin{enumerate}
\item $T\in \mathcal{M}_c(H)$ if and only if $T^{\frac{1}{2}}\in \mathcal{M}_c(H)$
\item  $T\in \mathcal{M}_c(H)$ if and only if $m(T)$ is an eigenvalue of $T$.
\end{enumerate}
\end{prop}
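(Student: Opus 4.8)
The plan is to reduce both equivalences to the single auxiliary identity $m\bigl(T^{\frac{1}{2}}\bigr)^{2}=m(T)$ and then read them off. To prove that identity, I would first note that for $x\in D(T)$ we have $T^{\frac{1}{2}}x\in D(T^{\frac{1}{2}})$ and, since $T^{\frac{1}{2}}$ is self-adjoint, $\|T^{\frac{1}{2}}x\|^{2}=\langle T^{\frac{1}{2}}x,T^{\frac{1}{2}}x\rangle=\langle Tx,x\rangle\leq\|Tx\|\,\|x\|$ by Cauchy--Schwarz; because $(T^{\frac{1}{2}})^{*}T^{\frac{1}{2}}=T$, the subspace $D(T)$ is a core for $T^{\frac{1}{2}}$, so $m(T^{\frac{1}{2}})=\inf\{\|T^{\frac{1}{2}}x\|:x\in S_{D(T)}\}$, and passing to the infimum over $S_{D(T)}$ in the estimate above gives $m(T^{\frac{1}{2}})^{2}\leq m(T)$. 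For the reverse inequality I would use the spectral theorem (the functional calculus underlying Theorem \ref{squareroot}): for any positive $R\in\mathcal C(H)$ one has $m(R)=\inf\sigma(R)$ --- the lower bound is a spectral-integral estimate and the matching upper bound comes from unit vectors in the range of a spectral projection of $R$ over $[\inf\sigma(R),\inf\sigma(R)+\varepsilon]$ --- while the spectral mapping theorem for $\lambda\mapsto\lambda^{2}$ gives $\sigma(T)=\{\lambda^{2}:\lambda\in\sigma(T^{\frac{1}{2}})\}$; since $\sigma(T^{\frac{1}{2}})\subseteq[0,\infty)$, this yields $m(T)=\inf\sigma(T)=\bigl(\inf\sigma(T^{\frac{1}{2}})\bigr)^{2}=m(T^{\frac{1}{2}})^{2}$.

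With this in hand, part (2) is short. The implication $(\Leftarrow)$ is immediate: if $Tx_{0}=m(T)x_{0}$ with $x_{0}\in S_{D(T)}$ then $\|Tx_{0}\|=m(T)$. For $(\Rightarrow)$, let $x_{0}\in S_{D(T)}$ attain $\|Tx_{0}\|=m(T)=:\mu$. The identity gives $\langle Tx,x\rangle=\|T^{\frac{1}{2}}x\|^{2}\geq m(T^{\frac{1}{2}})^{2}\|x\|^{2}=\mu\|x\|^{2}$ for every $x\in D(T)$; applied to $x_{0}$ and combined with $\langle Tx_{0},x_{0}\rangle\leq\|Tx_{0}\|=\mu$, it forces $\langle Tx_{0},x_{0}\rangle=\mu$, and then (as $\langle Tx_{0},x_{0}\rangle$ is real)
\begin{equation*}
\|Tx_{0}-\mu x_{0}\|^{2}=\|Tx_{0}\|^{2}-2\mu\langle Tx_{0},x_{0}\rangle+\mu^{2}\|x_{0}\|^{2}=\mu^{2}-2\mu^{2}+\mu^{2}=0,
\end{equation*}
so $Tx_{0}=\mu x_{0}$ and $m(T)\in\sigma_{p}(T)$. (Alternatively one can bypass the hard half of the identity here: differentiating $t\mapsto\|T(x_{0}+th)\|^{2}/\|x_{0}+th\|^{2}$ at its minimum $t=0$, for each $h\in D(T)$, gives $Tx_{0}\in D(T^{*})$ with $T^{*}Tx_{0}=\mu^{2}x_{0}$, i.e.\ $T^{2}x_{0}=\mu^{2}x_{0}$; then $v:=(T-\mu)x_{0}$ satisfies $Tv=-\mu v$, whence $-\mu\|v\|^{2}=\langle Tv,v\rangle\geq0$ by positivity, so $v=0$ when $\mu>0$, and $\mu=0$ is trivial.)

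Part (1) then follows from (2) and the identity. If $T\in\mathcal M_{c}(H)$, choose by (2) a unit $x_{0}\in D(T)$ with $Tx_{0}=m(T)x_{0}$; then $x_{0}\in S_{D(T^{\frac{1}{2}})}$ and $\|T^{\frac{1}{2}}x_{0}\|^{2}=\langle Tx_{0},x_{0}\rangle=m(T)=m(T^{\frac{1}{2}})^{2}$, so $\|T^{\frac{1}{2}}x_{0}\|=m(T^{\frac{1}{2}})$ and $T^{\frac{1}{2}}\in\mathcal M_{c}(H)$. Conversely, if $T^{\frac{1}{2}}\in\mathcal M_{c}(H)$, apply (2) to the positive operator $T^{\frac{1}{2}}$ to obtain a unit $x_{0}$ with $T^{\frac{1}{2}}x_{0}=m(T^{\frac{1}{2}})x_{0}$; since $m(T^{\frac{1}{2}})x_{0}\in D(T^{\frac{1}{2}})$ we get $x_{0}\in D(T)$ and $Tx_{0}=T^{\frac{1}{2}}(T^{\frac{1}{2}}x_{0})=m(T^{\frac{1}{2}})^{2}x_{0}=m(T)x_{0}$, so $\|Tx_{0}\|=m(T)$ and $T\in\mathcal M_{c}(H)$.

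The one genuinely non-routine point, I expect, is the inequality $m(T^{\frac{1}{2}})^{2}\geq m(T)$ in the first step: its companion $m(T^{\frac{1}{2}})^{2}\leq m(T)$ is a soft Cauchy--Schwarz-plus-core argument, but this direction really does require the spectral calculus, since a minimising sequence for $T^{\frac{1}{2}}$ need not lie in $D(T)$ and must be cut down to the bottom of the spectrum in order to control $\|T(\,\cdot\,)\|$. Everything downstream is a two-line computation, with positivity entering only through the elementary identities $\langle Tx,x\rangle=\|T^{\frac{1}{2}}x\|^{2}$ and $\langle Tv,v\rangle\geq0$.
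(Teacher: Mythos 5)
This proposition is quoted in the paper from \cite[Propositions 3.8, 3.9]{shkgr7} without any proof, so there is no internal argument to compare yours against; judged on its own, your proof is correct. The reduction to the identity $m(T^{\frac{1}{2}})^{2}=m(T)$ works: the core argument for the inequality $m(T^{\frac{1}{2}})^{2}\leq m(T)$ is fine (indeed $D(T)=D\bigl((T^{\frac{1}{2}})^{*}T^{\frac{1}{2}}\bigr)$ is a core for $T^{\frac{1}{2}}$, as the preliminaries record), the eigenvector extraction in part (2) via $\|Tx_{0}-\mu x_{0}\|^{2}=0$ is the standard equality-in-Cauchy--Schwarz argument and is valid because $T$ is positive, the alternative variational argument (vanishing first-order term forces $T^{2}x_{0}=\mu^{2}x_{0}$, then positivity kills $v=(T-\mu)x_{0}$) is also sound, and the deduction of part (1) from part (2) applied to both $T$ and $T^{\frac{1}{2}}$ is correct. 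One remark: the step you flag as the only non-routine point, $m(T^{\frac{1}{2}})^{2}\geq m(T)$, does not actually need the spectral mapping theorem or the identification $m(R)=\inf\sigma(R)$. Proposition \ref{minnumradius}, which the paper states for exactly this setting, gives $m(T)=\inf\{\langle Tx,x\rangle:x\in S_{D(T)}\}=\inf\{\|T^{\frac{1}{2}}x\|^{2}:x\in S_{D(T)}\}$, and combining this with your core argument yields $m(T)=m(T^{\frac{1}{2}})^{2}$ in one line, in both directions at once; this is closer in spirit to how the cited source organizes these facts, and it keeps the whole proof at the level of quadratic forms rather than the full spectral calculus. Your spectral-theorem route is of course also legitimate and has the advantage of identifying $m$ with the bottom of the spectrum, which is the fact exploited later in the paper's Theorem \ref{densenesspositive}.
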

\begin{prop}\label{minnumradius}\cite[Proposition 3.5]{shkgr7}
Let $T\in \mathcal{C}(H)$ be positive. Then \begin{equation*}
m(T)=\inf{\{\langle Tx,x\rangle: x\in S_{D(T)}}\}.
\end{equation*}
\end{prop}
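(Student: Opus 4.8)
The plan is to establish the two inequalities $\mu\le m(T)$ and $m(T)\le\mu$ separately, where I abbreviate $\mu:=\inf\{\langle Tx,x\rangle:x\in S_{D(T)}\}$ and recall that $m(T)=\inf\{\|Tx\|:x\in S_{D(T)}\}$. The inequality $\mu\le m(T)$ is immediate: for $x\in S_{D(T)}$, positivity of $T$ together with the Cauchy--Schwarz inequality gives $0\le\langle Tx,x\rangle\le\|Tx\|\,\|x\|=\|Tx\|$, and taking the infimum over $x\in S_{D(T)}$ yields $\mu\le m(T)$.

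For the reverse inequality I would shift $T$ down by $\mu$ and bring in the square root. Since $\langle Tx,x\rangle\ge 0$ on $D(T)$ we have $\mu\ge 0$, and by homogeneity the definition of $\mu$ gives $\langle Tx,x\rangle\ge\mu\|x\|^2$ for all $x\in D(T)$; hence $T_\mu:=T-\mu I$, with domain $D(T)$, is positive and self-adjoint, and $\inf\{\langle T_\mu x,x\rangle:x\in S_{D(T)}\}=0$. I claim $m(T_\mu)=0$. If not, then $m(T_\mu)>0$, so by the remark noting that $m(S)>0$ precisely when $S$ is one-to-one with closed range, $T_\mu$ is injective with closed range; as $T_\mu$ is self-adjoint, $\overline{R(T_\mu)}=N(T_\mu)^\perp=H$, and closedness of $R(T_\mu)$ forces $R(T_\mu)=H$, so $T_\mu$ is a bijection of $D(T)$ onto $H$ whose inverse is bounded by the closed graph theorem. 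Its positive square root $B:=T_\mu^{1/2}$ (Theorem \ref{squareroot}) then satisfies $N(B)=N(T_\mu)=\{0\}$ and $R(B)\supseteq R(B^2)=R(T_\mu)=H$, so $B$ is a bijection with bounded inverse as well; thus $\|Bx\|\ge c\|x\|$ for some $c>0$ and all $x\in D(B)$. But for $x\in D(T)=D(B^2)$ we get $\langle T_\mu x,x\rangle=\langle B^2x,x\rangle=\|Bx\|^2\ge c^2\|x\|^2$, contradicting $\inf\{\langle T_\mu x,x\rangle:x\in S_{D(T)}\}=0$. Hence $m(T_\mu)=0$: there are $z_n\in S_{D(T)}$ with $\|T_\mu z_n\|\to 0$, whence $\|Tz_n\|\le\|T_\mu z_n\|+\mu\|z_n\|=\|T_\mu z_n\|+\mu\to\mu$, so $m(T)\le\mu$. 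Together with the first inequality this gives $m(T)=\mu$.

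The step I expect to be the main obstacle is precisely $m(T)\le\mu$: the number $\mu$ lives on the quadratic form $\langle Tx,x\rangle=\|T^{1/2}x\|^2$, while $m(T)$ lives on $\|Tx\|$, and there is no elementary pointwise bound of $\|Tx\|$ by a function of $\langle Tx,x\rangle$ alone (the two can differ wildly on vectors spread over eigenspaces with very different eigenvalues). What makes the argument go through is that ``bounded below'' transfers between a positive operator and its square root --- invertibility with bounded inverse is inherited by $B=T_\mu^{1/2}$ --- and this is exactly what upgrades the form lower bound to an operator-norm lower bound. As an alternative one could invoke the spectral theorem for the self-adjoint operator $T$, in which case both $m(T)$ and $\mu$ equal the distance from $0$ to $\sigma(T)$, using spectral projections of $T$ near the bottom of its spectrum to produce near-optimal unit vectors that still lie in $D(T)$; the only delicate point there is the domain bookkeeping, which the square-root argument circumvents.
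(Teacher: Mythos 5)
Your argument is correct. Note first that this paper does not prove the proposition at all: it is imported by citation from \cite[Proposition 3.5]{shkgr7}, so there is no in-paper proof to compare against; in the cited source the identity is essentially read off from the spectral theorem (both quantities coincide with the bottom of the spectrum of the positive operator $T$), which is the alternative you sketch at the end. Your route is a legitimate, self-contained substitute that only uses facts already quoted in this paper: Cauchy--Schwarz gives $\mu\le m(T)$; for the converse you pass to $T_\mu=T-\mu I$ (still positive, self-adjoint, closed, with form infimum $0$) and rule out $m(T_\mu)>0$ by the remark that $m(S)>0$ forces $S$ injective with closed range, whence $T_\mu$ is boundedly invertible, its square root $B=T_\mu^{1/2}$ is again a bijection with bounded inverse (here $N(B)=N(B^2)$, $R(B)\supseteq R(B^2)=H$, and the closed graph theorem applied to the closed, everywhere defined $B^{-1}$ are the points a careful reader will want spelled out, and you handle them correctly), and then $\langle T_\mu x,x\rangle=\|Bx\|^2\ge c^2\|x\|^2$ contradicts the vanishing form infimum; finally $\|Tz_n\|\le\|T_\mu z_n\|+\mu$ yields $m(T)\le\mu$. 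What the spectral-theorem proof buys is brevity and the explicit identification $m(T)=\min\sigma(T)=\mu$ (which this paper implicitly uses later, e.g.\ in Theorem \ref{densenesspositive} when it invokes ``$m(T)$ is the smallest spectral value''); what your proof buys is independence from the spectral theorem, trading it for the square-root theorem plus the closed graph theorem, with the domain bookkeeping kept entirely inside $D(T)$.
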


\begin{theorem}\label{densenesspositive}
Let $T\in \mathcal C(H)$ be positive. Then for each $\epsilon>0$, there exists an operator $S\in \mathcal B(H)$
 such that
 \begin{enumerate}
\item \label{epsilonnorm} $\|S\|\leq \epsilon$
\item \label{perturbednormattaining}$T+S$ is minimum attaining
\item \label{gapperturb} $\theta(S+T,T)\leq \epsilon$.
\end{enumerate}
\end{theorem}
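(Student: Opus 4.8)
The plan is to localise at the bottom of the spectrum of $T$ and there replace $T$ by a small functional-calculus perturbation that pins an eigenvalue at the bottom while keeping the operator positive. Write $a:=m(T)$. By Proposition \ref{minnumradius} and the spectral theorem for the positive self-adjoint operator $T$, $a=\inf\sigma(T)$ and $a\in\sigma(T)$; moreover, by the characterisation of minimum attaining positive operators recalled above, $T$ is minimum attaining precisely when $a$ is an eigenvalue of $T$. If $a$ is an eigenvalue of $T$, take $S=0$; so assume it is not. Fix $\eta$ with $0<\eta\le\epsilon$, let $Q:=E_T\big([a,a+\eta)\big)$ be the corresponding spectral projection of $T$ (nonzero, since $a\in\sigma(T)$), and note $I-Q=E_T\big([a+\eta,\infty)\big)$ because $\sigma(T)\subseteq[a,\infty)$. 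Since $Q$ reduces $T$, this gives $H=K_1\oplus K_2$ with $K_1=R(Q)$ and $T=T_1\oplus T_2$, where $T_1:=T|_{K_1}\in\mathcal B(K_1)$ is positive with $\sigma(T_1)\subseteq[a,a+\eta]$ and $a\in\sigma(T_1)$, while $T_2:=T|_{K_2}$ is positive with $m(T_2)=\inf\sigma(T_2)\ge a+\eta>a$.

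Next I would define the perturbation on $K_1$ through functional calculus: let $\phi(\lambda):=\max\{\lambda-\tfrac{\eta}{2},\,a\}$, a continuous function with $\phi\ge a$, with $\phi\equiv a$ on $(-\infty,a+\tfrac{\eta}{2}]$, and with $|\phi(\lambda)-\lambda|\le\tfrac{\eta}{2}$ on $[a,a+\eta]$. Put $S_1:=\phi(T_1)-T_1\in\mathcal B(K_1)$; it is self-adjoint with $\|S_1\|=\sup_{\lambda\in\sigma(T_1)}|\phi(\lambda)-\lambda|\le\tfrac{\eta}{2}$. Then $\widetilde T_1:=T_1+S_1=\phi(T_1)$ is positive with $\sigma(\widetilde T_1)=\phi(\sigma(T_1))\subseteq[a,a+\tfrac{\eta}{2}]$, so $\inf\sigma(\widetilde T_1)=a$, and $a$ is an eigenvalue of $\widetilde T_1$: indeed $E_{\widetilde T_1}(\{a\})=E_{T_1}\big(\phi^{-1}(\{a\})\big)$ is the spectral projection of $T_1$ over a neighbourhood of $a$ in $\sigma(T_1)$, hence nonzero. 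Therefore $m(\widetilde T_1)=a$ is attained at a unit eigenvector $x_0\in K_1$.

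Finally I would set $S:=S_1Q\in\mathcal B(H)$ (that is, $S_1$ on $K_1$ and $0$ on $K_2$), self-adjoint, with $\|S\|=\|S_1\|\le\tfrac{\eta}{2}\le\epsilon$, which is (1). Then $T+S=\widetilde T_1\oplus T_2$, and from $m(\widetilde T_1)=a<a+\eta\le m(T_2)$ one gets $m(T+S)=\min\{m(\widetilde T_1),m(T_2)\}=a$, attained at $x_0$ (note $x_0\in K_1=R(Q)\subseteq D(T)=D(T+S)$ and $(T+S)x_0=ax_0$): this is (2). For (3), since $(T+S)-T=S$ is bounded and $D(T+S)=D(T)$, Theorem \ref{specialgapformula}(2) gives $\theta(S+T,T)\le\|S\|\le\epsilon$.

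The step needing most care is the assertion that compressing the spectrum of $T$ near $a$ onto the single point $a$ --- rather than translating it strictly below $a$ --- keeps $T+S$ positive with $a$ as both its minimum modulus and an eigenvalue, and that extending the perturbation by zero off $K_1$ preserves this, which is exactly where the separation of $\sigma(T_1)\subseteq[a,a+\eta]$ from $\sigma(T_2)\subseteq[a+\eta,\infty)$ is used; the spectral-mapping and composition identities for $\phi(T_1)$ and the reduction $T=T_1\oplus T_2$ are routine. I also note that a rank-one $S$ is available --- via a rank-one resolvent perturbation inside $K_1$ when $a>0$, chosen so that $T_1+S$ acquires an eigenvalue below its essential spectrum (using Weyl's theorem, Proposition \ref{weyltheorem}), and via the rank-one map carrying a near-null vector of $T$ into the kernel of $T+S$ when $a=0$ --- which in addition yields the rank-one refinement mentioned in the abstract.
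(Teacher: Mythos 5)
Your argument is correct, but it is a genuinely different proof from the paper's. The paper perturbs by an explicit rank-one operator: it picks an almost-minimizing unit vector $x_\epsilon$, subtracts $C_\epsilon=\epsilon\langle\cdot,x_\epsilon\rangle x_\epsilon$, and uses Weyl's theorem (Proposition \ref{weyltheorem}) to force $m(T-C_\epsilon)$ out of the essential spectrum and into the discrete spectrum, with a separate case distinction for $T$ not one-to-one and for $m(T)=0$ (where it first shifts by $\tfrac{\epsilon}{2}I$). You instead split $H$ by the spectral projection $Q=E_T([a,a+\eta))$ at $a=m(T)=\inf\sigma(T)$ and flatten the bottom of the spectrum onto the point $a$ via the functional-calculus perturbation $\phi(T_1)-T_1$ with $\phi(\lambda)=\max\{\lambda-\tfrac{\eta}{2},a\}$; the composition rule $E_{\phi(T_1)}(\{a\})=E_{T_1}(\phi^{-1}(\{a\}))\neq 0$ then produces an eigenvector at the bottom, and the direct-sum estimate $m(T+S)=\min\{m(\widetilde T_1),m(T_2)\}=a$ together with Theorem \ref{specialgapformula}(2) gives all three conclusions. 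Your route is uniform (no case analysis on $m(T)>0$, $m(T)=0$, or injectivity), keeps $T+S$ positive, and avoids Weyl's theorem, at the price of invoking the full spectral theorem and Borel functional calculus; the paper's route is more elementary at each step and, crucially, produces a \emph{rank-one} $S$ when $m(T)>0$, which is what Theorem \ref{perturbofminattaining} (the ``Moreover'' part) and Theorem \ref{bddbelowdense} later quote from this construction. Your $S=S_1Q$ is in general not even compact, so those later refinements would rest only on the rank-one sketch in your closing remark (resolvent perturbation plus Weyl's theorem), which as written is an outline rather than a proof; for the theorem as stated, however, your argument is complete.
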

\begin{proof}
We prove the results by considering the following cases.

Case ($1$): $m(T)>0$\\ First we may assume that $0<\epsilon<m(T)$.
 Since, $T\geq 0$ and $m(T)=\displaystyle \inf_{x\in S_{D(T)}}\langle Tx,x\rangle$ by Proposition \ref{minnumradius}, there exists  $x_{\epsilon}\in S_{D(T)}$, such that
\begin{equation}\label{norminequality}
\langle Tx_{\epsilon},x_{\epsilon}\rangle< m(T)+\frac{\epsilon}{2}.
\end{equation}
Now, define
\begin{equation}\label{rankoneoperator}
C_{\epsilon}(x)=\epsilon \,\langle x,x_{\epsilon}\rangle x_{\epsilon}.
\end{equation}
 Then clearly, $C_{\epsilon}$ is a rank one positive, bounded operator with $\|C_{\epsilon}\|=\|C_{\epsilon}(x_{\epsilon})\|=\epsilon$.

Let $T_{\epsilon}=T-C_{\epsilon}$.  Clearly,  $T_{\epsilon}$ is self-adjoint. In fact, we show that $T_{\epsilon}\geq 0$. To this end, let $x\in D(T_{\epsilon})=D(T)$. Then
\begin{align*}
\langle T_{\epsilon}x,x\rangle &=\langle Tx,x\rangle -\epsilon |\langle x,x_{\epsilon}\rangle|^2\\
                                               &\geq (m(T)-\epsilon )\langle x,x\rangle  \; \;(\text{by Cauchy-Schwarz inequality}).
\end{align*}
 In fact,
 \begin{equation}\label{lowernumboundeq}
 \|T_{\epsilon}^{\frac{1}{2}}x\|^2\geq (m(T)-\epsilon)\, \|x\|^2 \; \text{ for all} \; x\in D(T_{\epsilon}).
 \end{equation}
  Note that $D(T_{\epsilon})\subseteq D(T_{\epsilon}^{\frac{1}{2}})$. Using the fact that $D(T_{\epsilon})$ is a core for $D(T_{\epsilon}^{\frac{1}{2}})$, we can conclude that the inequality (\ref{lowernumboundeq}) holds for all $x\in D(T_{\epsilon}^{\frac{1}{2}})$. Hence  $T_{\epsilon}^{\frac{1}{2}}$ is invertible and consequently, $T_{\epsilon}$ is invertible. So $m(T_{\epsilon})>0$.

  We claim that $T_{\epsilon}\in \mathcal M_c(H)$.  We show that $m(T_{\epsilon})\in \sigma_d(T_{\epsilon})$.  Assume that $m(T_{\epsilon})\in \sigma_{ess}(T_{\epsilon})$. Then by the Weyl's theorem we have
   $\sigma_{ess}(T_{\epsilon})=\sigma_{ess}(T)$. Note as $m(T)\in \sigma(T)$ and $m(T)$ is the smallest spectral value,  we can conclude that $m(T)\leq m(T_{\epsilon})$.

   But we have
\begin{equation*}
m(T_{\epsilon})\leq \langle T_{\epsilon}x_{\epsilon},x_{\epsilon}\rangle =\langle Tx_{\epsilon},x_{\epsilon}\rangle -\epsilon <m(T) -\frac{\epsilon}{2}< m(T).
\end{equation*}
Thus our assumption that $m(T_{\epsilon})\in \sigma_{ess}(T_{\epsilon})$ is wrong. As $m(T_{\epsilon})\in \sigma(T_{\epsilon})$, it must  hold that $m(T_{\epsilon})\in \sigma_d(T_{\epsilon})$. Consequently, $T_{\epsilon}\in \mathcal M_c(H)$.

Note that $T_{\epsilon}\in \mathcal C(H)$ and $T_{\epsilon}-T=C_{\epsilon}|_{D(T)}$ is a bounded operator with domain $D(T)$. By Theorem \ref{specialgapformula}, it follows that
\begin{equation*}
\theta(T_{\epsilon}, T)\leq \|T_{\epsilon}-T\|=\|C_{\epsilon}|_{D(T)}\|\leq \|C_{\epsilon}\|=\epsilon.
\end{equation*}

Take $S=-C_{\epsilon}$. Then $S$ satisfies the stated conditions.

Case $(2)$ $T$ is not one-to-one\\
Clearly $T$ is minimum attaining. In this case  $S=0$ satisfy the required properties.

Case $(3)$ $T$ is one-to-one and $m(T)=0$\\
We can use case $(1)$ to get the desired operator $S$. Note that $T+\dfrac{\epsilon}{2} I$ is positive and $m(T+\dfrac{\epsilon}{2} I)=\dfrac{\epsilon}{2}$. Hence by Case ($1$) above, there exists a positive rank one operator $C$ with $\|C\|\leq \dfrac{\epsilon}{2}$ such that $T+\dfrac{\epsilon}{2} I-C$ is minimum attaining and $\theta(T, T+\dfrac{\epsilon}{2} I-C)\leq \|\dfrac{\epsilon}{2} I-C\|\leq \epsilon$. Let $S=\dfrac{\epsilon}{2}I-C$. Then $S$ satisfy all the required conditions.
\end{proof}

Now we prove the above result for the general case.

\begin{theorem}\label{perturbofminattaining}
  Let $T\in \mathcal C(H_1,H_2)$. Then for each $\epsilon>0$ there exists an operator $S\in \mathcal B(H_1,H_2)$ with $\|S\|\leq \epsilon$ such that $S+T$ is minimum attaining and $\theta(S+T,T)\leq \epsilon$. Moreover, if $m(T)>0$ then $S$ can be chosen to be rank one operator.
\end{theorem}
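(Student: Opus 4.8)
The plan is to reduce to the positive case established in Theorem~\ref{densenesspositive} by way of the polar decomposition $T=V|T|$. First dispose of the trivial case: if $T$ is not one-to-one, then $N(T)\neq\{0\}$ and $T$ is already minimum attaining by the Note after Definition~\ref{minmattainingdefn}, so $S=0$ does the job. Hence from now on assume $T$ is one-to-one.

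Since $N(|T|)=N(T)=\{0\}$ and $|T|$ is self-adjoint, $\overline{R(|T|)}=N(|T|)^{\bot}=H_1$; as the partial isometry $V$ in $T=V|T|$ has initial space $\overline{R(T^*)}=\overline{R(|T|)}=H_1$, this means $V$ is an isometry on all of $H_1$, i.e.\ $\|Vy\|=\|y\|$ for every $y\in H_1$. Now apply Theorem~\ref{densenesspositive} to the positive operator $|T|$: there is $S_0\in\mathcal B(H_1)$ with $\|S_0\|\le\epsilon$ such that $|T|+S_0\in\mathcal M_c(H_1)$, and $S_0$ may be taken rank one whenever $m(|T|)>0$. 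Put $S:=VS_0\in\mathcal B(H_1,H_2)$. Then $\|S\|\le\|V\|\,\|S_0\|\le\epsilon$, $T+S$ is closed (a bounded perturbation of a closed operator) with $D(T+S)=D(T)=D(|T|)=D(|T|+S_0)$, and for $x\in D(T)$,
\begin{equation*}
(T+S)x=V|T|x+VS_0x=V\bigl(|T|+S_0\bigr)x,
\end{equation*}
so that $\|(T+S)x\|=\|(|T|+S_0)x\|$ for all $x\in D(T)$, using that $V$ is isometric on $H_1$. Consequently $m(T+S)=m(|T|+S_0)$, and any unit vector $x_0\in D(|T|)$ attaining the minimum modulus of $|T|+S_0$ (one exists since $|T|+S_0\in\mathcal M_c(H_1)$) also attains that of $T+S$; hence $T+S\in\mathcal M_c(H_1,H_2)$. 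The gap estimate follows from Theorem~\ref{specialgapformula}(2), since $(T+S)-T=S$ is bounded: $\theta(S+T,T)\le\|S\|\le\epsilon$. Finally, because $m(T)=m(|T|)$, the assumption $m(T)>0$ forces $m(|T|)>0$, so $S_0$ — and hence $S=VS_0$, $V$ being injective — is rank one.

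The only step needing care is recognizing that one-to-one-ness of $T$ is precisely what upgrades the partial isometry $V$ to a genuine isometry on $H_1$, which is what lets the minimum modulus — an infimum over unit vectors of the domain — transfer verbatim from $|T|+S_0$ to $T+S$. Without injectivity the identity $\|(T+S)x\|=\|(|T|+S_0)x\|$ would hold only on a proper subspace of $D(T)$ and the transfer would break down; but in that case $T$ is trivially minimum attaining, so nothing is lost, and this is exactly why the case split at the outset is the natural one.
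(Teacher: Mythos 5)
Your proposal is correct and follows essentially the same route as the paper: reduce to the positive case via the polar decomposition $T=V|T|$, apply Theorem~\ref{densenesspositive} to $|T|$, set $S=V$ times the resulting perturbation, use that $V$ is an isometry when $T$ is one-to-one to transfer the minimum modulus and the attaining vector, and invoke Theorem~\ref{specialgapformula} for the gap estimate. The only (harmless) difference is that you dispose of the non-injective case directly with $S=0$ at the outset, while the paper obtains $S=0$ from the construction in the positive case; you also spell out the isometry and rank-one details slightly more explicitly.
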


\begin{proof}
  Let $T=V|T|$ be the polar decomposition of $T$. Applying Theorem \ref{densenesspositive} to $|T|$, there exists $A\in \mathcal B(H_1)$ with $\|A\|\leq \epsilon$ and $|T|+A$ is minimum attaining.

  Define $S=VA$. Then $S\in \mathcal B(H_1,H_2)$ with $\|S\|\leq \epsilon$. Next, we claim that $T+S$ is minimum attaining. By construction in Theorem \ref{densenesspositive}, we have that $S=0$ if $T$ is not one-to-one, hence in this  case clearly $T+S=T$ is minimum attaining.

  If $T$ is one-to-one, then $V$ is an isometry and $T+S$ is minimum attaining as $|T|+A$ minimum attaining. Note that $m(T+S)=m(|T|+A)$.

  In case if $m(T)>0$, then $A$ is a rank one operator and so is the operator $S$.

Finally, by Theorem \ref{specialgapformula}, we have $\theta(S+T,T)\leq \|S\|\leq \epsilon$.
\end{proof}

The following Corollary is an  immediate consequence of Theorem \ref{perturbofminattaining}.
\begin{cor}\label{densenessingaptop}
The set $\mathcal M_{c}(H_1,H_2)$ is dense in $\mathcal C(H_1,H_2)$ with respect to the gap topology.

\end{cor}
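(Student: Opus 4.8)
The plan is to read the corollary off Theorem \ref{perturbofminattaining} with essentially no extra work. Fix an arbitrary $T\in \mathcal C(H_1,H_2)$ and $\epsilon>0$. Theorem \ref{perturbofminattaining} produces $S\in \mathcal B(H_1,H_2)$ with $\|S\|\leq \epsilon$ such that $T+S$ is minimum attaining and $\theta(S+T,T)\leq \epsilon$. So the only thing I need to verify before invoking the theorem is that $T+S$ is again a legitimate element of $\mathcal C(H_1,H_2)$, i.e.\ that it is densely defined and closed. Density is clear since $S$ is everywhere defined and bounded, so $D(T+S)=D(T)$ is dense in $H_1$. For closedness I would use the standard fact that a bounded perturbation of a closed operator is closed: if $(x_n)\subset D(T)$ with $x_n\to x$ and $(T+S)x_n\to y$, then $Tx_n=(T+S)x_n-Sx_n\to y-Sx$ by continuity of $S$, hence $x\in D(T)$ and $Tx=y-Sx$ since $T$ is closed, i.e.\ $x\in D(T+S)$ and $(T+S)x=y$. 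Thus $T+S\in \mathcal C(H_1,H_2)$, and therefore $T+S\in \mathcal M_c(H_1,H_2)$.

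Having established this, I would finish by a direct unwinding of the definition of the gap topology. We have exhibited, for each $T\in \mathcal C(H_1,H_2)$ and each $\epsilon>0$, an element $T+S\in \mathcal M_c(H_1,H_2)$ with $\theta(S+T,T)\leq \epsilon$; equivalently, every open ball of radius $\epsilon$ (in the gap metric) centered at an arbitrary $T\in \mathcal C(H_1,H_2)$ contains a point of $\mathcal M_c(H_1,H_2)$. Since $\epsilon>0$ was arbitrary, this says precisely that $\mathcal M_c(H_1,H_2)$ is dense in $\mathcal C(H_1,H_2)$ with respect to the gap topology, which is the assertion of the corollary. (If one prefers the special case: restricting to $\mathcal B(H_1,H_2)$, the gap topology coincides with the norm topology, so the same argument gives norm-denseness of the bounded minimum attaining operators in $\mathcal B(H_1,H_2)$.)

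I do not expect any genuine obstacle here: all of the substance is already contained in Theorem \ref{perturbofminattaining}, whose conclusion is stated with a uniform gap estimate precisely so that this density statement falls out immediately. The only point that is worth spelling out, rather than leaving implicit in the phrase ``immediate consequence,'' is the closedness of $T+S$ noted above, so that $T+S$ indeed lies in the space $\mathcal C(H_1,H_2)$ in which we are claiming density.
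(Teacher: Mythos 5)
Your proposal is correct and follows exactly the route the paper intends: the corollary is stated there as an immediate consequence of Theorem \ref{perturbofminattaining}, and your unwinding of that theorem (plus the routine check that $T+S$ is densely defined and closed, hence genuinely in $\mathcal M_c(H_1,H_2)$) is precisely what the paper leaves implicit.
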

\begin{cor}
The set of all minimum attaining bounded operators is dense in $\mathcal B(H_1,H_2)$ with respect to the norm topology of $\mathcal B(H_1,H_2)$.
\end{cor}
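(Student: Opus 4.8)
The plan is to deduce this directly from Theorem~\ref{perturbofminattaining}, using the observation --- recalled in Section~2 --- that on $\mathcal B(H_1,H_2)$ the gap topology coincides with the norm topology, together with the fact that the perturbing operator produced in that theorem is bounded.

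First I would fix $T\in\mathcal B(H_1,H_2)$ and $\epsilon>0$, and view $T$ as an element of $\mathcal C(H_1,H_2)$: since $T$ is everywhere defined and bounded it is closed, so Theorem~\ref{perturbofminattaining} applies. It supplies $S\in\mathcal B(H_1,H_2)$ with $\|S\|\le\epsilon$ such that $T+S\in\mathcal M_c(H_1,H_2)$. Next I would note that $T$ and $S$ are both bounded with domain all of $H_1$, so $T+S$ is bounded with domain $H_1$; thus $T+S$ is a bounded minimum attaining operator, and $\|(T+S)-T\|=\|S\|\le\epsilon$. Since $\epsilon>0$ is arbitrary, $T$ lies in the norm closure of the set of minimum attaining bounded operators, which is the assertion. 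A completely parallel route runs through Corollary~\ref{densenessingaptop}: it provides, for $T\in\mathcal B(H_1,H_2)$, elements of $\mathcal M_c(H_1,H_2)$ arbitrarily gap-close to $T$; one checks from the construction $T+S$ in Theorem~\ref{perturbofminattaining} that these approximants can be chosen bounded, and the coincidence of the gap and norm topologies on $\mathcal B(H_1,H_2)$ then upgrades gap convergence to norm convergence.

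There is no serious obstacle here; the only points needing (minor) attention are bookkeeping ones: that the perturbation in Theorem~\ref{perturbofminattaining} does not leave $\mathcal B(H_1,H_2)$, which holds because $S$ is bounded and everywhere defined, and that a \emph{minimum attaining bounded operator} is precisely an element of $\mathcal M_c(H_1,H_2)$ that lies in $\mathcal B(H_1,H_2)$, so that Definition~\ref{minmattainingdefn} applies verbatim. With those remarks in place the corollary is a one-line consequence of the preceding theorem.
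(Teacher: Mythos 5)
Your argument is correct. Your primary route is a slight but genuine variant of the paper's: the paper deduces the corollary from Corollary~\ref{densenessingaptop} together with the cited fact that the gap topology restricted to $\mathcal B(H_1,H_2)$ coincides with the norm topology, whereas you go back to Theorem~\ref{perturbofminattaining} itself, observe that the perturbation $S$ is bounded with $\|S\|\le\epsilon$, and read off the norm estimate $\|(T+S)-T\|\le\epsilon$ directly. This buys two small things: you do not need the external result on the equivalence of the gap and norm topologies at all, and you see immediately that the approximant $T+S$ is bounded and everywhere defined, a point which in the paper's route is implicit (one must know that the gap-close approximants produced for a bounded $T$ are themselves bounded, which again comes from the construction $T+S$ rather than from Corollary~\ref{densenessingaptop} alone). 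Your second, parallel route is exactly the paper's proof, and you correctly flag the bookkeeping point it needs. Both routes are sound; your direct one is the more self-contained.
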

\begin{proof}
The gap topology restricted to $\mathcal B(H_1,H_2)$ coincide with the norm topology of $\mathcal B(H_1,H_2)$ by \cite[Theorem 2.5]{nakamotogapformulas}.  Hence the conclusion follows by Corollary \ref{densenessingaptop}.
\end{proof}

\begin{theorem}\label{bddbelowdense}
Let   $$\mathcal  G:={\{T\in \mathcal M_c(H_1,H_2): T  \; \text{is densely defined and bounded below }}\}$$  and  $ C_{b}(H_1,H_2)={\{T\in \mathcal C(H_1,H_2): T\; \; \text{is bounded below}}\}$. If $T\in C_{b}(H_1,H_2)$ and  $\epsilon>0$, there exists $\tilde{T}\in \mathcal M_c(H_1,H_2)$ such that $\theta(T,\tilde{T})\leq \epsilon$.
\end{theorem}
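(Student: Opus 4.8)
The plan is to realize $\tilde T$ as a rank one bounded perturbation of $T$ by applying Theorem \ref{perturbofminattaining} directly to $T$; the only additional observation needed is that, provided the perturbation is smaller than $m(T)$, the perturbed operator remains bounded below, so that it in fact lands in $\mathcal G$ and not merely in $\mathcal M_c(H_1,H_2)$.

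First I would record that $T\in C_b(H_1,H_2)$ means there is $c>0$ with $\|Tx\|\ge c\|x\|$ for all $x\in D(T)$, so $m(T)\ge c>0$; equivalently, $T$ is injective with closed range. Replacing $\epsilon$ by $\min\{\epsilon,\tfrac{1}{2}m(T)\}$ if necessary, I may assume $0<\epsilon<m(T)$, which only strengthens the conclusion.

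Next I would invoke Theorem \ref{perturbofminattaining} for the operator $T$. Since $m(T)>0$, it furnishes a rank one operator $S\in\mathcal B(H_1,H_2)$ with $\|S\|\le\epsilon$ such that $T+S$ is minimum attaining and $\theta(T+S,T)\le\epsilon$. If one prefers an explicit construction rather than quoting that theorem, one can unwind its proof: with the polar decomposition $T=V|T|$, injectivity of $T$ gives $\overline{R(T^*)}=N(T)^{\bot}=H_1$, so $V$ is an isometry; since $m(|T|)=m(T)>0$, Case $(1)$ of Theorem \ref{densenesspositive} applied to $|T|$ produces a positive rank one operator $C_\epsilon$ with $\|C_\epsilon\|\le\epsilon$ so that $|T|-C_\epsilon$ is minimum attaining, and then $S:=-VC_\epsilon$ does the job because $\|(T+S)x\|=\|(|T|-C_\epsilon)x\|$ for all $x\in D(T)$.

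Finally I would set $\tilde T:=T+S$. Being the sum of the closed densely defined operator $T$ and the bounded everywhere-defined operator $S$, $\tilde T$ is closed and densely defined with $D(\tilde T)=D(T)$, and it is minimum attaining by construction, so $\tilde T\in\mathcal M_c(H_1,H_2)$; moreover, for $x\in D(T)$,
\begin{equation*}
\|\tilde Tx\|\ge\|Tx\|-\|Sx\|\ge (m(T)-\epsilon)\|x\|,
\end{equation*}
so $\tilde T$ is bounded below and hence $\tilde T\in\mathcal G$. Since $S=\tilde T-T$ is bounded with domain $D(T)$, Theorem \ref{specialgapformula} yields $\theta(T,\tilde T)=\theta(\tilde T,T)\le\|S\|\le\epsilon$, which finishes the argument and in fact shows that $\mathcal G$ is dense in $C_b(H_1,H_2)$ for the gap metric. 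There is essentially no obstacle here: the statement is a consequence of results already in hand, and the only point requiring a moment's thought is the reduction to $\epsilon<m(T)$, which is precisely what keeps $\tilde T$ bounded below.
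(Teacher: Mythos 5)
Your proposal is correct and follows essentially the same route as the paper: take the polar decomposition $T=V|T|$, apply the positive case (Theorem \ref{densenesspositive}, Case (1)) to $|T|$ to get a rank one perturbation, use that $V$ is an isometry to transfer minimum attainment to $\tilde T=T+VS$, and bound $\theta(T,\tilde T)$ by $\|S\|$ via Theorem \ref{specialgapformula}. Your extra check that $\tilde T$ stays bounded below when $\epsilon<m(T)$ (so $\tilde T\in\mathcal G$, not just $\mathcal M_c(H_1,H_2)$) is a worthwhile refinement that the paper leaves implicit, and it is what the subsequent corollary actually needs.
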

\begin{proof}
Let $T\in \mathcal G$ and $T=V|T|$ be the polar decomposition of $T$. By the assumption, $m(|T|)>0$. Following the steps in Theorem \ref{densenesspositive}, for each $\epsilon>0$, we get a rank one operator $S$ with $\|S\|\leq \epsilon$ such that $|T|+S\in \mathcal M_c(H_1)$. Since $V$ is an isometry, we can conclude that $\tilde{T}:=V(|T|+S)=T+VS$ is minimum attaining and $m(\tilde{T})=m(|T|+S)$.

Since $\tilde{T}-T=VS$ is a bounded operator with domain $D(T)$, by Theorem  \ref{specialgapformula}, we can conclude that $\theta(\tilde{T},T)\leq \epsilon$.
\end{proof}

\begin{cor}
The set   $$\mathcal M_{b}(H_1,H_2):={\{A\in \mathcal B(H_1,H_2): A\; \text{is minimum attaining and  bounded below}}\}$$  is norm  dense in $\mathcal  B_{b}(H_1,H_2)={\{A\in \mathcal B(H_1,H_2): A\; \text{is bounded below}}\}$.
\end{cor}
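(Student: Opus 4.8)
The plan is to deduce this from Theorem~\ref{bddbelowdense}, being slightly careful about two features that are implicit in its proof but not quite in its statement: that the approximating operator is obtained by a \emph{bounded} perturbation (so that closeness in the gap metric upgrades to closeness in operator norm), and that this perturbation keeps the operator bounded below.

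First I would observe that every $A\in\mathcal B_b(H_1,H_2)$ is an everywhere defined closed operator which is bounded below, so $A\in C_b(H_1,H_2)$; in particular the hypotheses used in the proof of Theorem~\ref{bddbelowdense} are met. Writing $A=V|A|$ for the polar decomposition, boundedness below gives $m(|A|)=m(A)>0$; fix $\epsilon$ with $0<\epsilon<m(|A|)$. Running the construction in the proofs of Theorem~\ref{bddbelowdense} and Case~$(1)$ of Theorem~\ref{densenesspositive} applied to the positive operator $|A|$, one obtains a rank one operator $S\in\mathcal B(H_1)$ with $\|S\|\le\epsilon$ such that $|A|+S\in\mathcal M_c(H_1)$ and, by inequality~(\ref{lowernumboundeq}), $\|(|A|+S)x\|\ge (m(|A|)-\epsilon)\|x\|$ for all $x\in H_1$, whence $m(|A|+S)\ge m(|A|)-\epsilon>0$.

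Now set $\tilde A:=V(|A|+S)=A+VS$. Since $V$ is an isometry (as $|A|$, hence $A$, is one-to-one because $m(|A|)>0$), $\tilde A$ is bounded, lies in $\mathcal M_c(H_1,H_2)$ with $m(\tilde A)=m(|A|+S)>0$, and is therefore bounded below; thus $\tilde A\in\mathcal M_b(H_1,H_2)$. Moreover $\|\tilde A-A\|=\|VS\|\le\|S\|\le\epsilon$. As $\epsilon\in(0,m(|A|))$ was arbitrary, $\mathcal M_b(H_1,H_2)$ is norm dense in $\mathcal B_b(H_1,H_2)$. I do not expect a genuine obstacle here; the only point to watch is to read off the norm bound $\|\tilde A-A\|\le\epsilon$ and the lower estimate $m(\tilde A)>0$ from the construction itself, rather than from the (gap-metric) conclusion as literally stated in Theorem~\ref{bddbelowdense}.
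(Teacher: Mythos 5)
Your argument is correct, but it closes the argument differently from the paper. The paper's proof of this corollary is two lines: it quotes the equivalence of the gap metric and the operator-norm metric on $\mathcal B(H_1,H_2)$ (the cited Theorem 2.5 of Nakamoto) and then appeals to Theorem \ref{bddbelowdense} as a black box. You bypass the gap metric entirely: you re-run the construction behind Theorems \ref{densenesspositive} and \ref{bddbelowdense} and read off the norm estimate $\|\tilde A-A\|=\|VS\|\le\|S\|\le\epsilon$ directly from the fact that the perturbation is a bounded (indeed rank one) operator. What your route buys is exactly the two points you flag: the statement of Theorem \ref{bddbelowdense} only asserts $\tilde T\in\mathcal M_c(H_1,H_2)$ with $\theta(T,\tilde T)\le\epsilon$, and by itself does not say that $\tilde T$ is bounded, nor that it is bounded below, both of which membership in $\mathcal M_b(H_1,H_2)$ requires; these properties are visible only in the construction (take $\epsilon<m(|A|)$, so that $m(|A|+S)\ge m(|A|)-\epsilon>0$ via the estimate (\ref{lowernumboundeq}), and $\tilde A=A+VS$ is bounded since $A$ and $VS$ are). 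The paper's proof implicitly relies on the same construction, so the underlying idea is identical; your write-up is more self-contained and patches these small gaps explicitly, at the cost of repeating the construction rather than citing the theorem and the gap--norm equivalence.
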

\begin{proof}
The gap metric and the metric induced by the operator norm are equivalent  on $\mathcal B(H_1,H_2)$ by \cite[Theorem 2.5]{nakamotogapformulas}. Hence the conclusion follows by Theorem \ref{bddbelowdense}.
\end{proof}

\bibliographystyle{amsplain}
\bibliography{minbib}

\end{document}